\documentclass{amsart}

\usepackage{amssymb,amsmath,amsthm}
\usepackage{graphicx}
\usepackage[cmtip,all]{xy}

\newtheorem{theorem}{Theorem}
\newtheorem{lemma}{Lemma}[section]
\newtheorem{prop}[lemma]{Proposition}
\newtheorem{cor}[theorem]{Corollary}

\theoremstyle{definition}

\theoremstyle{remark}
\newtheorem{remark}[lemma]{Remark}


\newcommand{\abs}[1]{\left\lvert#1\right\rvert}
\newcommand{\norm}[1]{\left\lVert#1\right\rVert}

\newcommand{\CB}{\mathcal{B}}

\newcommand{\CI}{\mathcal{I}}
\newcommand{\CJ}{\mathcal{J}}

\newcommand{\CM}{\mathcal{M}}

\newcommand{\vol}{{\rm vol}}

\begin{document}

\title[Points on curves in some small rectangles]
{The distribution of points on curves over finite fields in some small rectangles}

\author{Kit-Ho Mak}
\address{School of Mathematics \\
Georgia Institute of Technology \\
686 Cherry Street \\
Atlanta, GA 30332-0160, USA}
\email{kmak6@math.gatech.edu}

\subjclass[2010]{Primary 11G25; Secondary 11K36, 11T99}
\keywords{almost all, patterns, curves, rational points, uniform distribution}


\begin{abstract}
Let $p$ be a prime. We study the distribution of points on a class of curves $C$ over $\mathbb{F}_p$ inside very small rectangles $\CB$ for which the Weil bound fails to give nontrivial information. In particular, we show that the distribution of points on $C$ over some long rectangles is Gaussian.
\end{abstract}

\maketitle

\section{Introduction and statements of results}

Let $p$ be a large prime, and let $C\subseteq\mathbb{A}^2_p:=\mathbb{A}^2(\mathbb{F}_p)$ be an absolutely irreducible affine plane curve over $\mathbb{F}_p$ of degree $d>1$. We identify the affine plane with the set of points with integer coordinates in the square
$[0,p-1]^2$. For a rectangle $\CB=\CI\times\CJ \subseteq [0,p-1]^2$, we define $N_{\CB}(C)$ to be the number of (rational) points on $C$ inside $\CB$. When $\CB=[0,p-1]^2$, we will write $N(C)=N_{[0,p-1]^2}(C)$ for the number of points on $C$. It is widely believed that the points on $C$ are uniformly distributed in the plane. That is,
\begin{equation}\label{eqnNBud}
N_{\CB}(C)\sim N(C)\cdot\frac{\vol(\CB)}{p^2}.
\end{equation}
In fact, using some standard techniques involving exponential sums, one can show that the classical Weil bound \cite{Wei48} together with the Bombieri estimate \cite{Bom66} imply
\begin{equation}\label{eqnLW}
N_{\CB}(C) = N(C)\cdot\frac{\vol(\CB)}{p^{2}}+O(d^2\sqrt{p}\log^2{p}),
\end{equation}
where the implied constant is absolute. If $f$ and $g$ are two functions of $p$, we write
\begin{equation}\label{eqnomega}
f=\Omega(g)
\end{equation}
to denote the function $f/g$ tends to infinity as $p$ tends to infinity. In other words, \eqref{eqnomega} is equivalent to $g=o(f)$.
The main term of \eqref{eqnLW} dominates the error term when
\begin{equation}\label{eqnvolBlarge}
\vol(\CB)\gg p^{\frac{3}{2}}\log^{2+\epsilon}{p}.
\end{equation}
In those cases \eqref{eqnNBud} holds. A natural and intriguing  question that arises is whether \eqref{eqnNBud} continues to hold for smaller boxes $\CB$. However, very few is known for the number of points $N_{\CB}(C)$ in a small $\CB$. Indeed, given a particular small $\CB$ that do not satisfy \eqref{eqnvolBlarge}, we do not even know if $\CB$ contains a point or not.


One way to study $N_{\CB}(C)$ for small $\CB$ is to consider results on average. For instance, Chan \cite{Cha11} considered the number of points on average on the modular hyperbola $xy\equiv c$ modulo an odd number $q$, and showed that almost all (here ``almost all'' means with probability one) boxes satisfying
\begin{equation*}
\vol(\CB)\gg O(q^{\frac{1}{2}+\varepsilon})
\end{equation*}
have the expected number of points. Recently, Zaharescu and the author \cite{MaZa13} generalized the result of Chan to all curves over $\mathbb{F}_p$.

Another result of similar sort with only one moving side for $C$ being the modular hyperbola was obtained by Gonek, Krishnaswami and Sondhi \cite{GKS02}. In our language, they showed that if $\CB=(x,x+H]\times\CJ$ with $H$ very small and $\CJ$ of size comparable to $p$, then the numbers of points inside $\CB$ exhibit a Gaussian distribution when we move the box $\CB$ horizontally. A Gaussian distribution is also obtained by Zaharescu and the author \cite{MaZa11} in a similar situation. More precisely, we show that under some natural conditions, for $C$, $\CB$ as above, and if at least one of the character $\chi$, $\psi$ is nontrivial, the projections of the values of the hybrid exponential sum
\begin{equation}\label{eqnhyexp}
S = \sum_{P_i\in C\cap\CB}\chi(g(P_i))\psi(f(P_i))
\end{equation}
to any straight lines passing through the origin exhibit a Gaussian distribution when we move $\CB$ horizontally. We note that when $C$ is the affine line, a two-dimensional distribution of $S$ is obtained by Lamzouri \cite{Lam11}.

The aim of this paper is continue the study of $N_{\CB}(C)$ for small rectangle $\CB$. In particular, we show that for a large class of curves $C$, the distribution of $N_{\CB}(C)$ for the $\CB$ above is Gaussian. Our first step is to study the \textit{patterns} of points on curves, which is crucial for our study of $N_{\CB}(C)$ and may be of independent interest.

The study of patterns was first introduced by Cobeli, Gonek and Zaharescu \cite{CGZ03}, where they get results for the distribution of patterns of multiplicative inverses modulo $p$. We generalize their definition of patterns to curves by viewing the patterns in \cite{CGZ03} as patterns on the two coordinates for the curve $xy=1$. For any positive integer $s$, let $\mathbf{a}=(a_1,\ldots,a_s), \mathbf{b}=(b_1,\ldots,b_s)$ be two vectors so that all $a_i$'s are coprime to $p$, and all $a_1^{-1}b_1, \ldots, a_s^{-1}b_s$ are distinct modulo $p$. Define an $(\mathbf{a},\mathbf{b})$-\textit{pattern} to be an $s$-tuple of points $(P_1,\ldots,P_s)$, where each $P_i$ is of the form $(a_ix+b_i,y_i)$ for some $x$. As in \cite{CGZ03}, we may further restrict all the $y_i$ to lie in a specific interval $\CJ$ as we see fit.

In the case of the modular hyperbola in \cite{CGZ03}, if $\CJ=[0,p-1]$ the number of patterns is just $p-1$, since each $x$ corresponds to exactly one $y$ on the curve. However, for a general curve $C$ and any vector $\mathbf{a}, \mathbf{b}$, we do not know \textit{a priori} that even one pattern exists, since the two coordinates will not in general corresponds bijectively. Nevertheless, we are able to estimate the number of patterns for a large class of curves. Let $P(\CI,\CJ):=P_{\mathbf{a},\mathbf{b}}(C;\CI,\CJ)$ be the number of patterns with $x\in \CI$ and all $y$-coordinates lie in $\CJ$, then we have the following.

\begin{theorem}\label{thm1}
Let $C$ be a plane curve given by the equation $f(x,y)=0$. Let
\begin{equation}\label{eqnpi}
\pi:C\rightarrow \mathbb{A}^1,~(x,y)\mapsto x
\end{equation}
be the projection of $C$ to the first coordinates over $\overline{\mathbb{F}}_p$. Suppose there is an $x\in\overline{\mathbb{F}}_p$ so that $\pi$ ramifies completely, and let $\mathbf{a}=(a_1,\ldots,a_s), \mathbf{b}=(b_1,\ldots,b_s)$ be two vectors so that all $a_i$'s are coprime to $p$, and all $a_1^{-1}b_1, \ldots, a_s^{-1}b_s$ are distinct modulo $p$, then
\begin{equation*}
P(\CI,\CJ) = \abs{\CI}\left(\frac{\abs{\CJ}}{p}\right)^s+O(d^{2s}\sqrt{p}\log^{s+1}p).
\end{equation*}
In the case $\CI=[0,p-1]$, the error term can be slightly improved.
\begin{equation*}
P([0,p-1],\CJ) = p\left(\frac{\abs{\CJ}}{p}\right)^s+O(d^{2s}\sqrt{p}\log^{s}p).
\end{equation*}
\end{theorem}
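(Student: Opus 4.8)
The plan is to realize $P(\CI,\CJ)$ as a point count on an auxiliary curve and to detect the two interval conditions with additive characters. Writing $n_\CJ(u)=\#\{y\in\CJ:f(u,y)=0\}$ for the number of points of $C$ above $u$ with ordinate in $\CJ$, one has $P(\CI,\CJ)=\sum_{x\in\CI}\prod_{i=1}^s n_\CJ(a_ix+b_i)$. Thus $P(\CI,\CJ)$ counts the $\mathbb{F}_p$-points $(x,y_1,\dots,y_s)$ with $x\in\CI$ and every $y_i\in\CJ$ on the curve $V\subseteq\mathbb{A}^{s+1}$ defined by the $s$ equations $f(a_ix+b_i,y_i)=0$. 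Geometrically $V$ is the fibre product over the $x$-line of $s$ copies of $C$, where the $i$-th copy maps by $\psi_i\colon(u,y)\mapsto a_i^{-1}(u-b_i)$; it is cut out by $s$ hypersurfaces of degree $d$, hence has degree $O(d^s)$.

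Next I would detect $x\in\CI$ and $y_i\in\CJ$ by the exact formula $\mathbf{1}_\CI(x)=p^{-1}\sum_t\hat{\CI}(t)e_p(tx)$ (with $e_p(z)=e^{2\pi iz/p}$ and $\hat{\CI}(t)=\sum_{u\in\CI}e_p(-tu)$), and likewise for each copy of $\CJ$. This yields $P(\CI,\CJ)=p^{-(s+1)}\sum_{t,\mathbf{r}}\hat{\CI}(t)\bigl(\prod_i\hat{\CJ}(r_i)\bigr)S(t,\mathbf{r})$, where $S(t,\mathbf{r})=\sum_{(x,\mathbf{y})\in V(\mathbb{F}_p)}e_p\bigl(tx+\sum_i r_iy_i\bigr)$. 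The term $t=0,\ \mathbf{r}=\mathbf{0}$ gives $p^{-(s+1)}\abs{\CI}\,\abs{\CJ}^s\,\abs{V(\mathbb{F}_p)}$, which becomes the main term $\abs{\CI}(\abs{\CJ}/p)^s$ once $\abs{V(\mathbb{F}_p)}=p+O(d^{2s}\sqrt p)$; the discrepancy is absorbed and all other terms contribute to the error. For $(t,\mathbf{r})\ne\mathbf{0}$ the Bombieri estimate bounds $S(t,\mathbf{r})$ by $O(d^{2s}\sqrt p)$, since $V$ has degree $O(d^s)$ and the phase $tx+\sum_i r_iy_i$ is a non-degenerate function on $V$ (the coordinates $x,y_1,\dots,y_s$ being non-constant and independent). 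Combining with the standard kernel estimates $\sum_t\abs{\hat{\CI}(t)}\ll p\log p$ and $\sum_r\abs{\hat{\CJ}(r)}\ll p\log p$ produces the error $O(d^{2s}\sqrt p\,\log^{s+1}p)$. When $\CI=[0,p-1]$ the kernel $\hat{\CI}$ is supported at $t=0$, so the $t$-sum collapses, one logarithm is saved, and the error improves to $O(d^{2s}\sqrt p\,\log^{s}p)$.

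The real content is the absolute irreducibility of $V$, which underlies both the evaluation $\abs{V(\mathbb{F}_p)}=p+O(d^{2s}\sqrt p)$ via Weil and the applicability of Bombieri. I would prove it by induction on the number of factors, writing $W_k$ for the fibre product of the first $k$ copies and $\rho_k\colon W_k\to\mathbb{A}^1$ for its projection to the $x$-line. The hypothesis that $\pi$ is totally ramified over some $c\in\overline{\mathbb{F}}_p$ translates into $\psi_k$ being totally ramified over $x_k:=a_k^{-1}(c-b_k)$, and since $p>d$ this ramification is tame. Passing to the geometric Galois closure of the compositum over $\overline{\mathbb{F}}_p(x)$, with group $\Gamma$ and subgroups $H_i$ cutting out the $i$-th factor, the inertia generator $\sigma_k$ at $x_k$ acts as a single $d$-cycle on the cosets $\Gamma/H_k$ and trivially on $\Gamma/H_i$ for every $i\ne k$ at which $\psi_i$ is unramified over $x_k$. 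If the $x_k$ are distinct and each $x_k$ avoids the branch loci of the other $\psi_i$, then $\sigma_1,\dots,\sigma_s$ already act transitively on $\prod_i\Gamma/H_i$, whence $\Gamma$ does and $V=W_s$ is geometrically irreducible.

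The crux, and the step I expect to be the main obstacle, is to extract this transitivity from the stated hypotheses alone. The distinctness of the $a_i^{-1}b_i$ is what forces the total ramification points $x_k$ to be distinct and, one hopes, to avoid one another's branch loci, thereby decoupling the inertia actions on the $s$ factors; making this decoupling precise — rather than merely establishing $x_k\ne x_i$ — is the delicate point, and if two inertia elements interact on a common factor one must fall back on a Goursat-type analysis of the image of $\langle\sigma_1,\dots,\sigma_s\rangle$ in $\prod_i\mathrm{Sym}(\Gamma/H_i)$. One must also verify that the total ramification is genuinely a $d$-cycle (tameness, using $p$ large relative to $d$) and that the scheme-theoretic fibre product is reduced, so that Weil's bound for $\abs{V(\mathbb{F}_p)}$ and the degree bookkeeping for Bombieri go through. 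Once irreducibility is secured, the remaining estimates are routine and deliver the two stated error terms.
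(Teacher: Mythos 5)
Your reduction of $P(\CI,\CJ)$ to counting points of the fibre product $V=C_{\mathbf{a},\mathbf{b}}$ in the box $\CI\times\CJ^s$, together with the analytic part (completion by additive characters, Bombieri's estimate, and the kernel bounds giving $\log^{s+1}p$ in general and $\log^{s}p$ when $\CI=[0,p-1]$), is exactly what the paper does; the paper just packages the analytic step as a ready-made Weil-type bound for space curves in boxes (Lemma \ref{lem21}, quoted from \cite{MaZa12}). The genuine gap is precisely where you place it yourself: the absolute irreducibility of $V$. Your argument needs each totally ramified point $x_k$ of the $k$-th shifted copy to avoid the branch loci of all the other copies, and you concede this is only ``hoped for.'' It cannot be extracted from the hypotheses: the assumptions only force the shifts $a_i^{-1}b_i$ to be distinct, while the branch locus of $\pi$ is an arbitrary finite subset of $\overline{\mathbb{F}}_p$, so the translated branch loci may well overlap one another and contain the other factors' totally ramified points. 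The fallback you offer (``a Goursat-type analysis'') is a name for the problem, not a solution, so the proposal as written does not prove the theorem.

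The missing idea is the paper's Lemma \ref{lem22}, an escape lemma proved with Minkowski's theorem on lattice points: if $\CM\subseteq\overline{\mathbb{F}}_p$ is a finite set with $4\abs{\CM}<p^{1/r}$ and $x_1,\ldots,x_r\in\mathbb{F}_p$ are distinct, then some translate $\CM+x_j$ is \emph{not contained in} $\cup_{i\neq j}(\CM+x_i)$. Note this is much weaker than the pointwise decoupling you ask for --- one translate escapes the union of the others, nothing more --- but it suffices: applying it to the set of completely ramified points of $\pi$ with shifts $x_i=a_i^{-1}b_i$ (this is where the distinctness hypothesis and the smallness of $d$ relative to $p$ enter), and reordering the factors at each stage, the paper runs an induction along the chain of coverings $C_{\mathbf{a},\mathbf{b}}=C_s\rightarrow C_{s-1}\rightarrow\cdots\rightarrow C_1\cong C$, producing at each level a ``new'' completely ramified point for the factor being adjoined that the previously assembled fibre product does not share; the local ramification-index argument (the local avatar of your inertia-generator/transitivity argument) then keeps each successive fibre product irreducible. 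So your monodromy framework is compatible with the paper's proof, but the quantitative escape statement of Lemma \ref{lem22} is the engine that makes it run, and without it (or some substitute) your outline cannot be completed.
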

Note that our estimation for the number of patterns is independent of $\mathbf{a}$ and $\mathbf{b}$.




We are now ready for the study of distribution of $N_{\CB}(C)$ for small $\CB$. We fix an interval $\CJ\subseteq[0,p-1]$, and let $N=\abs{\CJ}$. For any $H>0$ (which may depends on $p$), let $\CB_x=(x,x+H]\times \CJ$. 
From now on, we will assume the following condition.
\begin{equation}\label{cond1}
\text{For any given~} x, \text{~there is at most one~} y \text{~so that~} (x,y)\in C\cap\CJ.
\end{equation}
This is the same condition we imposed in \cite{MaZa11} when Zaharescu and the author study the distribution of hybrid exponential sums over curves.


Define
\begin{equation}\label{defMkH}
M_k(H)=\sum_{x=0}^{p-1}\left( N_{\CB_x}(C)-\frac{HN}{p} \right)^k
\end{equation}
to be the $k$-th moment of the number of points in $C\cap\CB_x$ about its mean. We also define $\mu_k(H,P)$ to be the $k$-th moment of a binomial random variable $X$ with parameter $H$ and $P$, i.e.
\begin{equation}\label{defmuk}
\mu_k(H,P):=E((X-HP)^k)=\sum_{h=1}^{H}\binom{H}{h}P^h(1-P)^{H-h}(h-HP)^k.
\end{equation}
We estimate the moment $M_k(H)$ using the binomial model with parameter $H$ and $N/p$.
\begin{theorem}\label{thm3}
Fix a positive integer $k$. Let $C$ be a curve satisfying the assumptions in Theorem \ref{thm1} and the additional condition \eqref{cond1}. Set $\CB_x$, $H$, $N$ as above, we have
\begin{equation*}
M_k(H)=p\nu(H,N/p) + O_k(d^{2k}H^k\sqrt{p}\log^k{p}).
\end{equation*}
\end{theorem}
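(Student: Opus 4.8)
The plan is to express each box count as a sum of indicator functions, expand the $k$-th central moment combinatorially, and recognize the resulting correlation sums as pattern counts to which Theorem \ref{thm1} applies. Condition \eqref{cond1} is the crucial hypothesis: it guarantees that for each residue $a$ there is at most one $y\in\CJ$ with $(a,y)\in C$, so if we set $\chi(a)=1$ when such a point exists and $\chi(a)=0$ otherwise, then
\begin{equation*}
N_{\CB_x}(C)=\sum_{h=1}^{H}\chi(x+h),\qquad N_{\CB_x}(C)-\frac{HN}{p}=\sum_{h=1}^{H}\Big(\chi(x+h)-\frac{N}{p}\Big).
\end{equation*}
Writing $P=N/p$ and expanding the $k$-th power by the multinomial theorem, I would interchange the order of summation to obtain
\begin{equation*}
M_k(H)=\sum_{h_1,\dots,h_k=1}^{H}\ \sum_{x=0}^{p-1}\ \prod_{j=1}^{k}\big(\chi(x+h_j)-P\big).
\end{equation*}

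For a fixed tuple $(h_1,\dots,h_k)$, I collect the distinct shifts into values $g_1<\dots<g_s$ with multiplicities $m_1,\dots,m_s$ summing to $k$. Because $\chi$ is $\{0,1\}$-valued, each factor satisfies $(\chi(x+g_i)-P)^{m_i}=\alpha_i\chi(x+g_i)+\beta_i$, with $\alpha_i=(1-P)^{m_i}-(-P)^{m_i}$ and $\beta_i=(-P)^{m_i}$; here $|\alpha_i|,|\beta_i|\le 1$. Expanding the product over $i$ as a sum over subsets $S\subseteq\{1,\dots,s\}$ reduces the inner sum to the correlation sums $\sum_{x=0}^{p-1}\prod_{i\in S}\chi(x+g_i)$. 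This is exactly the pattern count $P([0,p-1],\CJ)$ for the vectors $\mathbf{a}=(1,\dots,1)$ and $\mathbf{b}=(g_i)_{i\in S}$: the hypotheses of Theorem \ref{thm1} hold since the entries $a_i=1$ are coprime to $p$ and, for $H<p$, the quantities $a_i^{-1}b_i=g_i$ are distinct modulo $p$. Hence the improved ($\CI=[0,p-1]$) estimate gives $pP^{|S|}+O(d^{2|S|}\sqrt p\,\log^{|S|}p)$, with the empty set contributing $p$.

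Collecting main terms, the subset expansion factorizes: the main term of the inner sum becomes $p\prod_{i=1}^{s}(\alpha_iP+\beta_i)=p\prod_{i=1}^{s}\big(P(1-P)^{m_i}+(1-P)(-P)^{m_i}\big)$. The key observation is that $P(1-P)^{m}+(1-P)(-P)^{m}$ is precisely the $m$-th central moment of a Bernoulli$(P)$ variable, so by independence $\prod_i(\alpha_iP+\beta_i)=E\big[\prod_{j=1}^{k}(Y_{h_j}-P)\big]$, where $Y_1,\dots,Y_H$ are i.i.d.\ Bernoulli$(P)$. Summing over all tuples then reassembles exactly $E\big[(\sum_h Y_h-HP)^{k}\big]=\mu_k(H,N/p)$ as in \eqref{defmuk}, yielding the main term $p\,\mu_k(H,N/p)$.

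It remains to control the error. For each tuple the subset expansion has at most $2^{k}$ terms, each carrying a coefficient of absolute value $\le 1$ and an error $O(d^{2|S|}\sqrt p\,\log^{|S|}p)=O_k(d^{2k}\sqrt p\,\log^{k}p)$ since $|S|\le s\le k$; summing over the at most $H^k$ tuples gives the stated bound $O_k(d^{2k}H^k\sqrt p\,\log^{k}p)$. The main obstacle is bookkeeping rather than analysis: one must check that grouping tuples by multiplicity, together with the factorization of the correlation main terms into Bernoulli central moments, reproduces the binomial moment $\mu_k(H,N/p)$ exactly, and that the hypotheses of Theorem \ref{thm1} are genuinely met by the shift vectors $\mathbf{b}=(g_i)$ (in particular that $H$ is small enough to keep the shifts distinct modulo $p$). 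All of the analytic content is already packaged in Theorem \ref{thm1}; the present argument is essentially a reduction of the moment computation to the pattern count.
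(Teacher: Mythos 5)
Your proof is correct, and it reaches the same analytic core as the paper---every correlation sum you produce is a pattern count with $\mathbf{a}=(1,\dots,1)$ handled by Theorem \ref{thm1}---but your combinatorial assembly is genuinely different. The paper expands $\left(N_{\CB_x}(C)-HN/p\right)^k$ by the binomial theorem, reduces to the \emph{uncentered} power sums $S_r(H)=\sum_x N_{\CB_x}(C)^r$, groups $r$-tuples of points by their set of distinct shifts using Stirling numbers $S(r,t)t!$, and then invokes Lemma \ref{lem23} (the Montgomery--Vaughan identity expressing $\mu_k(H,P)$ through Stirling numbers) to recognize the resulting main term as $p\mu_k(H,N/p)$. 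You instead center first: condition \eqref{cond1} lets you write $N_{\CB_x}(C)=\sum_h\chi(x+h)$ for a $\{0,1\}$-valued $\chi$, you collapse powers of centered indicators via $(\chi-P)^m=\alpha\chi+\beta$, and you reassemble the main term probabilistically, observing that $\alpha_iP+\beta_i$ is the $m_i$-th central moment of a Bernoulli$(P)$ variable, so that independence and linearity of expectation yield $\mu_k(H,N/p)$ directly with no appeal to Lemma \ref{lem23}. What your route buys is self-containedness and conceptual transparency: the ``binomial model'' appears literally as an expectation over i.i.d.\ indicators, and the bounds $\abs{\alpha_i},\abs{\beta_i}\le 1$ make the error bookkeeping clean. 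What the paper's route buys is brevity once Lemma \ref{lem23} is quoted, together with an evaluation of the power sums $S_r(H)$ themselves, which may be of independent interest. Both arguments share the same implicit requirement that the shifts be distinct modulo $p$ (i.e.\ $H<p$), which you at least make explicit, and both arrive at the identical error term $O_k(d^{2k}H^k\sqrt{p}\log^k p)$.
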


For a fixed $k$, it is well-known (see Montgomery and Vaughan \cite{MoVa86}, Lemma 11) that
\begin{equation*}
\mu_k(H,P)\ll (HP)^{k/2}+HP
\end{equation*}
uniformly for $0\leq P\leq 1$ and $H=1,2,3,\ldots$. Therefore, Theorem \ref{thm3} immediately implies the following.
\begin{cor}\label{cor3}
Assumptions as in Theorem \ref{thm3}. For any fixed $k$, we have
\begin{equation*}
M_k(H) \ll_k p(HN/p)^{k/2}+HN/p+d^{2k}H^k\sqrt{p}\log^k{p}.
\end{equation*}
\end{cor}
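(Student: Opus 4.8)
The plan is to obtain the corollary directly from Theorem \ref{thm3} by substituting the Montgomery--Vaughan estimate for the central moments of the binomial distribution. First I would record the output of Theorem \ref{thm3},
\[
M_k(H)=p\,\mu_k(H,N/p)+O_k\!\bigl(d^{2k}H^k\sqrt{p}\log^k p\bigr),
\]
so that the whole problem reduces to controlling the single main term $p\,\mu_k(H,N/p)$, where $\mu_k$ is the binomial central moment defined in \eqref{defmuk}.

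Next I would set $P:=N/p$ and note that, because $\CJ\subseteq[0,p-1]$ forces $N=\abs{\CJ}\le p$, we have $0\le P\le 1$; hence the quoted lemma of Montgomery and Vaughan applies at this value and gives $\mu_k(H,N/p)\ll_k (HN/p)^{k/2}+HN/p$. Multiplying through by $p$ then converts the main term of Theorem \ref{thm3} into $p(HN/p)^{k/2}$ together with the secondary contribution coming from the lower-order part of the Montgomery--Vaughan estimate, and carrying along the error term already present in Theorem \ref{thm3} yields precisely the stated three-term bound.

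I do not expect any genuine obstacle here: the corollary is a purely formal consequence of the two results quoted immediately before it, and the only point that needs verifying is that the parameter $P=N/p$ lies in the admissible range $[0,1]$ of the Montgomery--Vaughan bound, which is immediate from $N=\abs{\CJ}\le p$. The value of the statement is organizational rather than technical: it repackages the main term of Theorem \ref{thm3} in the explicit form $p(HN/p)^{k/2}$, which is exactly the growth rate of the $k$-th moment one must exhibit in order to run the method of moments toward a Gaussian limit for $N_{\CB_x}(C)$.
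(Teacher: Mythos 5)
Your approach is exactly the paper's: the paper, too, deduces the corollary in one line by inserting the Montgomery--Vaughan bound $\mu_k(H,P)\ll_k (HP)^{k/2}+HP$ (valid uniformly for $0\leq P\leq 1$) into Theorem \ref{thm3} with $P=N/p$. However, the final step that you leave implicit (``the secondary contribution coming from the lower-order part'') does not give ``precisely the stated three-term bound'': multiplying the lower-order term $HP=HN/p$ by $p$ produces $HN$, not $HN/p$. What your argument actually proves is
\begin{equation*}
M_k(H)\ll_k p\left(\frac{HN}{p}\right)^{k/2}+HN+d^{2k}H^k\sqrt{p}\log^k{p},
\end{equation*}
whose middle term is larger by a factor of $p$ than the one in the statement you were asked to prove.

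This is not a pedantic point, because the statement as printed (middle term $HN/p$) is false in general; it is evidently a typo in the paper, whose own one-line derivation makes the same substitution slip. Indeed, the term $HN$ can be absorbed into $p(HN/p)^{k/2}$ only when $HN\geq p$ (since $k\geq 2$), so it cannot simply be discarded in the regime $HN<p$. Concretely, take $k=4$, $H=1$, $N\approx p^{9/10}$, and a curve meeting the hypotheses (e.g.\ $y^2=x$ with $\CJ=[0,N]$, for which condition \eqref{cond1} holds). Then $\mu_4(1,N/p)\sim N/p$, so Theorem \ref{thm3} forces $M_4(1)\sim N=p^{9/10}$, while the right-hand side as stated is $p(N/p)^{2}+N/p+d^{8}\sqrt{p}\log^4{p}\approx p^{4/5}$, which is smaller. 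So your strategy is correct and identical to the paper's, but a complete proof must compute the middle term explicitly, and doing so yields the corrected bound with $HN$ in place of $HN/p$; asserting that the computation gives the stated bound ``precisely'' papers over an error that in fact resides in the statement itself.
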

\begin{remark}
For the case of curves, \cite[Theorem 2]{MaZa13} gives an upper bound for the second moment of $N_{\CB}(C)$ when $\CB$ is allowed to move freely on the plane. That theorem implies $M_2(H)\ll p\mu_2(H,N/p)$. Since $\mu_2(H,P)=HP(1-P)$, Theorem \ref{thm3} shows that \cite[Theorem 2]{MaZa13} has the correct main term, and therefore is best possible for the case of curves (with suitable $H$ and $N$).
\end{remark}

Let
\begin{equation*}
\nu_k =
\begin{cases}
1\cdot3\cdot\ldots\cdot(k-1) &, k \text{~even}, \\
0 &, k \text{~odd},
\end{cases}
\end{equation*}
then (see \cite[Lemma 10]{MoVa86})
\begin{equation*}
\mu_k(H,P)=(\nu_k+o(1))(HP(1-P))^{k/2}
\end{equation*}
as $HP(1-P)$ tends to infinity. From this and Theorem \ref{thm3} we obtain the following.
\begin{cor}\label{cor4}
For any fixed $k$, if $H=o\left(\frac{p^{1/2k}}{d^2\log{p}}\right)$ and $(HN/p)(1-N/p)\rightarrow \infty$ as $p$ tends to infinity, then
\begin{equation*}
M_k(H)=p(\nu_k+o(1))\left(\frac{HN}{p}\left(1-\frac{N}{p}\right)\right)^{k/2}.
\end{equation*}
In particular, when $N\sim c p$, $0 < c < 1$ and $\log{H}/\log{p}\rightarrow 0$ as $p$ tends to infinity, the distribution of $N_{\CB_x}(C)$ tends to a Gaussian distribution with mean $HN/p$ and variance $(HN/p)(1-N/p)$.
\end{cor}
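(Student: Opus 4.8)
The plan is to deduce both assertions from Theorem \ref{thm3} by the method of moments, the only genuine work being to check that the error term in Theorem \ref{thm3} is swamped by the main term and that the moment hypotheses hold for every fixed $k$ simultaneously. Throughout I write $\lambda=\frac{HN}{p}\left(1-\frac{N}{p}\right)$ for the variance of the binomial model, and I recall from the discussion preceding the corollary that $\mu_k(H,N/p)=(\nu_k+o(1))\lambda^{k/2}$ whenever $\lambda\to\infty$.

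For the first displayed identity I would start from Theorem \ref{thm3}, which gives
\[
M_k(H)=p\,\mu_k(H,N/p)+O_k\!\left(d^{2k}H^k\sqrt p\,\log^k p\right),
\]
and substitute the binomial asymptotic, so that the main term becomes $p(\nu_k+o(1))\lambda^{k/2}$. It then remains to absorb the error. Writing it as $\left(d^2H\log p\right)^k\sqrt p$ and using the hypothesis $H=o\!\left(p^{1/2k}/(d^2\log p)\right)$, one gets $d^2H\log p=o(p^{1/2k})$, hence $\left(d^2H\log p\right)^k=o(p^{1/2})$, so the error is $o(p)$. Since $\lambda\to\infty$ forces $p\lambda^{k/2}\gg p$, the error is $o(p\lambda^{k/2})$ and may be folded into the $o(1)$, yielding $M_k(H)=p(\nu_k+o(1))\lambda^{k/2}$.

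For the ``in particular'' clause I would specialize to $N\sim cp$ with $0<c<1$, so that $N/p\to c$ and $\lambda\sim c(1-c)H$. Taking $H\to\infty$ (which is exactly what makes $\lambda$ grow and the limit nondegenerate), the hypothesis $\lambda\to\infty$ holds. The role of the growth condition $\log H/\log p\to0$ is that it makes $H=p^{o(1)}$; since $d$ is fixed and $\log p=p^{o(1)}$, the threshold $p^{1/2k}/(d^2\log p)$ is of size $p^{1/2k-o(1)}$ and therefore dominates $H$ for \emph{every} fixed $k$. Thus the first part applies to all $k$ at once. Finally I would invoke the method of moments: picking $x$ uniformly from $\{0,\dots,p-1\}$ and setting $Z=\left(N_{\CB_x}(C)-HN/p\right)/\sqrt\lambda$, the $k$-th moment of $Z$ equals $M_k(H)/(p\lambda^{k/2})$, which by the above tends to $\nu_k$, the $k$-th moment of the standard normal. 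Because the normal law is determined by its moments (Carleman's condition), the Fr\'echet--Shohat theorem gives that $Z$ converges in distribution to $\mathcal N(0,1)$, i.e. $N_{\CB_x}(C)$ is asymptotically Gaussian with mean $HN/p$ and variance $\lambda$.

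I expect the main conceptual obstacle to be precisely the uniformity issue just flagged: the admissible range of $H$ in the first part shrinks as $k$ grows, so the method of moments is available only on a range of $H$ on which the $k$-th moment estimate survives for \emph{every} $k$. Isolating the subpolynomial range $\log H/\log p\to0$ as such a range, and checking that it is compatible with $\lambda\to\infty$, is the crux; the error-term bookkeeping in the first two paragraphs is then routine.
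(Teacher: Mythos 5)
Your proposal is correct and follows exactly the route the paper intends: the paper states Corollary \ref{cor4} as an immediate consequence of Theorem \ref{thm3} combined with the Montgomery--Vaughan asymptotic $\mu_k(H,P)=(\nu_k+o(1))(HP(1-P))^{k/2}$, and your error-term bookkeeping plus the method-of-moments step is precisely the fleshing-out of that deduction. Your observation that $\log H/\log p\to 0$ is what makes the moment estimate valid for every fixed $k$ simultaneously (with $d$ fixed) is the right reading of the ``in particular'' clause, which the paper leaves implicit.
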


\begin{remark}
If condition \eqref{cond1} does not hold, we may still have Gaussian distribution for the $N_{\CB_x}(C)$. For example, if $C$ is a hyperelliptic curve, and choose $\CJ$ to be the interval $(-\alpha p,\alpha p]$ for some $0 < \alpha < 1/2$, then generically one $x$-coordinate on the curve corresponds to two $y$-coordinates. From Corollary \ref{cor4}, we have Gaussian distribution for $\CJ_1=[0,\alpha p]$, and also for $\CJ_2=[-\alpha p,0]$, with the same mean and variance. After combining the two of them we will have Gaussian distribution for the whole interval $\CJ$.
\end{remark}


\section{Preliminary lemmas}

In this section we collect all the preliminary lemmas that will be used in the subsequent sections. The first lemma is the Weil bound for space curves. For a proof, see \cite[Theorem 2.1]{MaZa12}.
\begin{lemma}\label{lem21}
Let $C$ be an absolutely irreducible curve in the affine $r$-space $\mathbb{A}^r_p$ of degree $d>1$, which is not contained in any hyperplane. Let $\CB=\CI_1\times\ldots\times\CI_r$ be a box, then
\begin{equation*}
N_{\CB}(C)=p\cdot\frac{\vol(\CB)}{p^r}+O(d^2\sqrt{p}\log^t{p}),
\end{equation*}
where $t$ is the number of intervals $\CI_i$ that are not the full interval $[0,p-1]$.
\end{lemma}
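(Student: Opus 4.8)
The plan is to estimate $N_{\CB}(C)$ by detecting the membership conditions $x_i\in\CI_i$ with additive characters, thereby reducing the count to complete exponential sums over $C$. Writing $e_p(n)=e^{2\pi i n/p}$, I would begin from
\[
N_{\CB}(C)=\sum_{P=(x_1,\ldots,x_r)\in C}\prod_{i=1}^{r}\mathbf{1}_{\CI_i}(x_i)
\]
and expand each indicator in its finite Fourier series
\[
\mathbf{1}_{\CI_i}(x_i)=\frac{\abs{\CI_i}}{p}+\frac{1}{p}\sum_{t_i=1}^{p-1}\widehat{\mathbf{1}}_{\CI_i}(t_i)\,e_p(t_ix_i),\qquad \widehat{\mathbf{1}}_{\CI_i}(t_i)=\sum_{n\in\CI_i}e_p(-t_in).
\]
The key structural observation is that when $\CI_i=[0,p-1]$ every coefficient $\widehat{\mathbf{1}}_{\CI_i}(t_i)$ with $t_i\neq0$ vanishes, so only the $t$ \emph{proper} intervals contribute oscillatory terms; this is exactly what will make the exponent of the logarithm equal to $t$ rather than $r$.

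Multiplying out the product, the diagonal term where all $t_i=0$ yields the main term $N(C)\cdot\vol(\CB)/p^{r}$. By the classical Weil bound for the number of points on an absolutely irreducible curve, $N(C)=p+O(d^2\sqrt{p})$, and since $\vol(\CB)/p^{r}\leq1$ this replaces the main term by $p\cdot\vol(\CB)/p^{r}$ at an admissible cost of $O(d^2\sqrt{p})$. Each of the remaining terms is, up to a product of Fourier coefficients, a complete exponential sum
\[
\sum_{P\in C}e_p\!\left(\sum_{i\in S}t_ix_i\right),
\]
indexed by a nonempty subset $S$ of the proper intervals together with a choice of $t_i\neq0$ for $i\in S$.

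To bound these sums I would invoke Bombieri's estimate for exponential sums over a space curve: for a function $g$ nonconstant on $C$ and of bounded degree one has $\abs{\sum_{P\in C}e_p(g(P))}\ll d^2\sqrt{p}$. Here $g(P)=\sum_{i\in S}t_ix_i$ is linear, and it is nonconstant on $C$ precisely because $C$ is not contained in any hyperplane; this is the one place where that hypothesis enters, and it is the conceptual heart of the proof. It then remains to sum the Fourier coefficients: for a proper interval the elementary bound $\abs{\widehat{\mathbf{1}}_{\CI_i}(t_i)}\ll\min(\abs{\CI_i},\norm{t_i/p}^{-1})$ gives $\tfrac1p\sum_{t_i\neq0}\abs{\widehat{\mathbf{1}}_{\CI_i}(t_i)}\ll\log p$. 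Factoring the multiple sum over $S$ and the $t_i$, and noting that the dominant contribution comes from $S$ running over all $t$ proper intervals at once, produces the error $O(d^2\sqrt{p}\log^{t}p)$ and hence the asserted estimate.

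The main obstacle is not the Fourier bookkeeping, which is routine, but securing a sufficiently uniform version of Bombieri's bound for curves embedded in $\mathbb{A}^r$ rather than in the plane, with the explicit dependence $d^2\sqrt{p}$ on the degree; this requires controlling the genus and the contribution of the points at infinity in terms of $d$ alone, and checking that a linear form $\sum_{i\in S}t_ix_i$ is genuinely nonconstant on an irreducible $C$ spanning $\mathbb{A}^r$. Once that input is in hand, the expansion above delivers the lemma directly.
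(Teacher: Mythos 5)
Your proposal is correct and follows essentially the same route as the paper's source for this lemma: the paper itself gives no proof but cites \cite[Theorem 2.1]{MaZa12}, whose argument is exactly this completion technique --- Fourier expansion of the interval indicators (with the full intervals contributing no oscillatory terms, which is what caps the logarithm exponent at $t$), the Weil bound $N(C)=p+O(d^2\sqrt{p})$ for the diagonal term, and Bombieri's estimate for the complete sums $\sum_{P\in C}e_p\bigl(\sum_{i\in S}t_ix_i\bigr)$, with the hypothesis that $C$ lies in no hyperplane guaranteeing nonconstancy of the linear forms. Your two flagged concerns are also non-issues in the literature: Bombieri's theorem in \cite{Bom66} is stated for absolutely irreducible curves in affine $n$-space with the required $O(d^2\sqrt{p})$ uniformity, and nonconstancy of $\sum_{i\in S}t_ix_i$ on $C$ is immediate from the no-hyperplane hypothesis.
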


The next lemma states that if we translate a set in $\mathbb{F}_p$ a small number of times, it will always reach a new element. This lemma allows us to show later that some curves are absolutely irreducible. 
\begin{lemma}\label{lem22}
Let $r\geq 2$, $x_1, \ldots, x_r\in\mathbb{F}_p$ be $r$ distinct elements. Suppose $\CM$ is a nonempty finite subset of the algebraic closure $\overline{\mathbb{F}}_p$ with $4\abs{\CM}<p^{\frac{1}{r}}$.
Then there exists a $j\in\{1,\ldots,r\}$ such that the translate $\CM+x_j$ is not contained in $\cup_{i\neq j}(\CM+x_i)$.
\end{lemma}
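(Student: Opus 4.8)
The plan is to argue by contradiction: suppose that for every $j$ one has $\CM+x_j\subseteq\bigcup_{i\neq j}(\CM+x_i)$, and deduce that $\abs{\CM}$ must be at least of order $p^{1/r}$, contradicting the hypothesis $4\abs{\CM}<p^{1/r}$.

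I would first reduce to $\CM\subseteq\mathbb{F}_p$. Each $x_i$ lies in $\mathbb{F}_p$, so translation by $x_i$ preserves every coset of $\mathbb{F}_p$ in the additive group $\overline{\mathbb{F}}_p$; hence the containment above holds if and only if it holds within each coset separately, and to violate it for some $j$ it suffices to violate it for some $j$ inside a single nonempty coset. Choosing a base point identifies such a coset with $\mathbb{F}_p$ and turns translation by $x_i$ into addition, so I may assume $\CM\subseteq\mathbb{F}_p$ with $\abs{\CM}=M$. Writing $X=\{x_1,\dots,x_r\}$ and $c(u)=\#\{i:u-x_i\in\CM\}$ for the number of representations of $u$ in the sumset $\CM+X$, the negation of the conclusion says precisely that $c$ never equals $1$: every element of $\CM+X$ lies in at least two of the translates. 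Summing gives $\sum_u c(u)=Mr$, hence $\abs{\CM+X}\le Mr/2<p$, so $\CM+X$ is a proper subset of $\mathbb{F}_p$ and some residue is omitted; equivalently, no sum $m+x_i$ is uniquely represented.

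The heart of the argument is to show that this ``no uniquely represented sum'' condition is incompatible with $M$ being small, and here I would induct on $r$. The base case $r=2$ is clean: $c\ge2$ on its support forces $\mathbf{1}_{\CM}(u)+\mathbf{1}_{\CM}(u-d)\neq 1$ for $d=x_1-x_2\neq0$, i.e.\ $\CM=\CM+d$; since $d$ generates $\mathbb{F}_p$ this gives $\CM=\mathbb{F}_p$, contradicting $M<p$. For the inductive step I would use the condition, rewritten as $\CM\subseteq\CM+D_j$ with $D_j=\{x_i-x_j:i\neq j\}$ a set of $r-1$ nonzero shifts (valid for every $j$), to pass from the configuration $(\CM,X)$ to a related ``folded'' configuration carrying only $r-1$ shifts but still satisfying the no-unique-sum condition; since $4M<p^{1/r}<p^{1/(r-1)}$, the reduced configuration automatically meets the hypothesis at level $r-1$, and the induction closes.

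The main obstacle is constructing this reduction, i.e.\ the growth/anti-folding estimate. The small cases where $c\ge2$ genuinely holds on a proper sumset all rely on wrap-around --- a sum $m+x_i$ that exceeds $p$ and re-enters the already occupied region --- and wrap-around is exactly what an ordering argument on $\mathbb{F}_p$ cannot detect, which is why the naive ``take the largest sum'' proof fails. The delicate point is therefore to run the iteration while bounding how much folding can occur, and this is precisely where the budget $4M<p^{1/r}$, i.e.\ $(4M)^r<p$, is spent: $r$ successive folds cannot be fit inside $\mathbb{F}_p$ unless $M$ is already as large as $p^{1/r}/4$. The coset reduction and the additive reformulation above are routine; pinning down the per-step growth factor (and thereby the constant $4$ and the exponent $r$) is the genuinely hard part.
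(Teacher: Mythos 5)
Your proposal contains a genuine gap, and you acknowledge it yourself: the inductive step is never constructed. The reformulation (every element of $\CM+X$ covered at least twice), the coset reduction to $\CM\subseteq\mathbb{F}_p$, and the base case $r=2$ are all correct, but the entire content of the lemma lies in the passage from a counterexample with $r$ shifts to one with $r-1$ shifts, and you give no mechanism for it. The obstruction is not merely ``pinning down the per-step growth factor'': it is unclear that any such folded configuration exists at all. The condition $\CM\subseteq\CM+D_j$ for every $j$ does not by itself produce a smaller set with $r-1$ shifts satisfying the same no-unique-sum condition, and without an explicit construction there is nothing to which the inductive hypothesis can be applied. As written, the argument proves the lemma only for $r=2$.

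The paper avoids induction entirely, and its mechanism is worth contrasting with yours because it resolves exactly the wrap-around difficulty you identified. First, counterexamples are invariant under scaling: if $(x_1,\ldots,x_r,\CM)$ is a counterexample, so is $(tx_1,\ldots,tx_r,t\CM)$ for any $t\neq 0$. By Minkowski's theorem (simultaneous Diophantine approximation of the $r$ numbers $x_j/p$), one chooses $t$ so that every $tx_j$ is congruent to an integer $y_j$ with $\abs{y_j}\leq p(p-1)^{-1/r}$; this single step is where the exponent $1/r$ is spent, rather than across $r$ iterations. After restricting to one coset $\alpha+\mathbb{F}_p$ (your coset reduction, performed after rescaling), the longest gap between consecutive elements of the rescaled set has length at least $p/\abs{\CM}>4p^{1-1/r}$, which exceeds $2\max_j\abs{y_j}$. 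Now the naive extremal argument does work: translating the appropriate endpoint of that gap by the shift $y_{j_0}$ of largest absolute value lands a point inside the gap that no other translate $\tilde{\CM}+y_i$ can reach, since $0<\abs{y_{j_0}-y_i}\leq 2\abs{y_{j_0}}$ is shorter than the gap. In short, the rescaling makes all shifts simultaneously small compared to the largest gap, so wrap-around can no longer defeat the ``take the extreme point'' argument --- this is the idea your sketch is missing.
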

\begin{proof}
Suppose $(x_1,\ldots,x_r,\CM)$ provides a counterexample to the statement of the lemma. Then it is clear that for any nonzero $t\in\mathbb{F}_p$, the tuple $(tx_1,\ldots,tx_r,t\CM)$ is another counterexample.

By Minkowski's theorem on lattice points in a convex symmetric body, there exists a nonzero integer $t$ such that
\begin{equation*}
\begin{cases}
\abs{t} &\leq p-1 \\
\norm{\frac{tx_1}{p}} &\leq (p-1)^{-\frac{1}{r}} \\
&\vdots \\
\norm{\frac{tx_r}{p}} &\leq (p-1)^{-\frac{1}{r}}. \\
\end{cases}
\end{equation*}
Thus there are integers $y_j$ such that
\begin{equation}\label{condyj}
\begin{cases}
\abs{y_j} &\leq p(p-1)^{-\frac{1}{r}} \\
y_j &\equiv tx_j \pmod{p} \\
\end{cases}
\end{equation}
for any $j\in\{1,\ldots,r\}$, and $(y_1,\ldots,y_r,t\CM)$ provides a counterexample. Now let $j_0$ be such that $\abs{y_{j_0}}=\max_{1\leq j\leq r}\abs{y_j}$. Choose $\alpha\in t\CM$ and consider the set $\tilde{\CM}=t\CM\cap(\alpha+\mathbb{F}_p)$. Then $(y_1,\ldots,y_r,\tilde{\CM})$ will also be a counterexample.

Note that $\alpha+\mathbb{F}_p$ can be written as a union of at most $\abs{\CM}$ intervals (i.e. subsets of $\mathbb{F}_p$ consisting of consecutive integers or its translate in $\overline{\mathbb{F}}_p$) whose endpoints are in $\tilde{\CM}$. Let $\{\alpha+a,\alpha+a+1,\ldots,\alpha+b\}$ be the longest of these intervals. Then
\[\abs{b-a} \geq \frac{p}{|\tilde{\CM}|}\geq\frac{p}{\abs{\CM}}.\]
By this, \eqref{condyj} and the hypothesis $4\abs{\CM}<p^{\frac{1}{r}}$, we have
\[\abs{b-a}>4p^{1-\frac{1}{r}}>2\abs{y_{j_0}}.\]
Now if $y_{j_0}>0$, then $\alpha+a+y_{j_0}$ belongs to $\tilde{\CM}+y_{j_0}$ but does not belong to $\cup_{i\neq j_0}(\tilde{\CM}+y_i)$, while if $y_{j_0}>0$, then $\alpha+b+y_{j_0}$ belongs to $\tilde{\CM}+y_{j_0}$ but does not belong to $\cup_{i\neq j_0}(\tilde{\CM}+y_i)$. This contradicts the fact that $(y_1,\ldots,y_r,\tilde{\CM})$ is a counterexample, and completes our proof.
\end{proof}

Recall that the Stirling number of second kind, $S(r,t)$, is by definition the number of partition of a set of cardinality $r$ into exactly $t$ nonempty subsets. The proof of the following lemma can be found in \cite{MoVa86}.
\begin{lemma}\label{lem23}
Let $\mu_k(H,P)$ be defined by \eqref{defmuk}, then
\begin{equation*}
\mu_k(H,P) = \sum^k_{r=0}\binom{k}{r}(-HP)^{k-r}\left( \sum_{t=0}^r\binom{H}{t}S(r,t)t!P^t \right).
\end{equation*}
\end{lemma}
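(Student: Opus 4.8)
The plan is to reduce the central moment successively to the raw moments and then to the factorial moments of the binomial distribution, at which stage the Stirling numbers $S(r,t)$ enter naturally. Write $X$ for the binomial random variable with parameters $H$ and $P$, so that $E(X)=HP$. The first step is to expand $(X-HP)^k$ by the binomial theorem and take expectations, which gives
\begin{equation*}
\mu_k(H,P)=E\bigl((X-HP)^k\bigr)=\sum_{r=0}^k\binom{k}{r}(-HP)^{k-r}E(X^r).
\end{equation*}
Matching this against the target identity, it suffices to establish the raw-moment formula $E(X^r)=\sum_{t=0}^r\binom{H}{t}S(r,t)\,t!\,P^t$.

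For the raw moments I would convert powers into falling factorials using the defining property of the Stirling numbers of the second kind, namely $x^r=\sum_{t=0}^r S(r,t)\,x(x-1)\cdots(x-t+1)$. Applying this with $x=X$ and taking expectations yields
\begin{equation*}
E(X^r)=\sum_{t=0}^r S(r,t)\,E\bigl(X(X-1)\cdots(X-t+1)\bigr),
\end{equation*}
so the whole question reduces to evaluating the falling factorial moments $E\bigl(X^{\underline{t}}\bigr)$, where $X^{\underline{t}}=X(X-1)\cdots(X-t+1)$.

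The last step is the clean evaluation $E\bigl(X^{\underline{t}}\bigr)=\binom{H}{t}t!\,P^t$, which I would prove directly from the binomial law \eqref{defmuk}. The terms with $h<t$ vanish since $h^{\underline{t}}=0$ there, and for $h\geq t$ the identity $h^{\underline{t}}\binom{H}{h}=\frac{H!}{(H-t)!}\binom{H-t}{h-t}$ allows me to pull out the constant factor $\frac{H!}{(H-t)!}P^t=\binom{H}{t}t!\,P^t$, after which the remaining sum $\sum_{j=0}^{H-t}\binom{H-t}{j}P^j(1-P)^{H-t-j}$ collapses to $(P+(1-P))^{H-t}=1$. Substituting this back through the two previous displays produces exactly the stated formula.

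Every step here is elementary, so I do not anticipate any real obstacle; the only place needing mild care is the reindexing in the factorial moment, where one must verify both that the low-order terms drop out and that the residual binomial sum telescopes to $1$. As an alternative one could read off the factorial moments directly from the probability generating function $E(z^X)=(1-P+Pz)^H$ by differentiating $t$ times and setting $z=1$, but the combinatorial computation above is the more self-contained route.
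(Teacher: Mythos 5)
Your proof is correct. Note that the paper does not actually prove this lemma; it only cites Montgomery and Vaughan \cite{MoVa86}, so there is no internal proof to compare against. Your route --- expand $(X-HP)^k$ binomially, convert raw moments to factorial moments via $x^r=\sum_{t=0}^r S(r,t)\,x(x-1)\cdots(x-t+1)$, and evaluate $E\bigl(X(X-1)\cdots(X-t+1)\bigr)=\binom{H}{t}t!\,P^t$ by the reindexing $h\mapsto h-t$ --- is the standard self-contained derivation, and all three steps check out, including the identity $h(h-1)\cdots(h-t+1)\binom{H}{h}=\frac{H!}{(H-t)!}\binom{H-t}{h-t}$ and the collapse of the residual binomial sum to $1$.

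One point you should flag explicitly: your argument takes $\mu_k(H,P)=E\bigl((X-HP)^k\bigr)$, i.e.\ the sum over $0\leq h\leq H$, whereas \eqref{defmuk} as printed starts the sum at $h=1$ and therefore omits the term $(1-P)^H(-HP)^k$. With the printed definition the stated identity is already false for $k=1$: the lemma's right-hand side is $(-HP)+HP=0$, while the sum over $h\geq 1$ equals $HP(1-P)^H\neq 0$. So the lower limit in \eqref{defmuk} is evidently a typo, and your interpretation (the full expectation, including $h=0$) is the one under which the lemma is true; since you cite \eqref{defmuk} as your starting point, you should state that you are reading it as the full expectation.
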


\section{Patterns of curves: Proof of Theorem \ref{thm1}}

Let $C$ be a plane curve given by the equation $f(x,y)=0$ and two vectors $\mathbf{a}=(a_1,\ldots,a_s), \mathbf{b}=(b_1,\ldots,b_s)$ so that $p\nmid a_i$ and $a_1^{-1}b_1,\ldots,a_s^{-1}b_s$ are all distinct modulo $p$, we define the $x$-\textit{shifted} curve $C_{\mathbf{a},\mathbf{b}}$ to be the space curve in the affine $(s+1)$-space with varaibles $x,y_1,\ldots,y_r$ and equations
\begin{equation}\label{eqnshift}
f(a_ix+b_i,y_i)=0, ~\forall 1\leq i\leq s.
\end{equation}
It is not difficult to see that $C_{\mathbf{a},\mathbf{b}}$ is indeed a curve, and its degree is less than or equal to $d^s$. Note that similar constructions appeared in \cite{MaZa11b, MaZa11, MaZa12}.

It is clear from the defining equations \eqref{eqnshift} that a point on $C_{\mathbf{a},\mathbf{b}}$ corresponds to an $(\mathbf{a},\mathbf{b})$-pattern on $C$, i.e.
\begin{equation*}
P_{\mathbf{a},\mathbf{b}}(C,\CI,\CJ) = N_{\CB}(C_{\mathbf{a},\mathbf{b}}),
\end{equation*}
where $\CB=\CI\times(\CJ)^s$. We want to show that $C_{\mathbf{a},\mathbf{b}}$ is absolutely irreducible. Currently we are not able to prove this for all curves $C$, but we are able to show the irreducibility for the class of curves so that the projection $\pi$ defined by \eqref{eqnpi} has a completely ramified point.
\begin{prop}\label{prop31}
If $C$ satisfies the assumptions in Theorem \ref{thm1}, then $C_{\mathbf{a},\mathbf{b}}$ is absolutely irreducible.
\end{prop}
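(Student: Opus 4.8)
The plan is to realize $C_{\mathbf{a},\mathbf{b}}$ as a fibre product over the $x$-line and to prove irreducibility by an inertia (monodromy) argument powered by the completely ramified point. First I would observe that the $i$-th equation $f(a_ix+b_i,y_i)=0$ cuts out a plane curve $C_i$ in the $(x,y_i)$-plane that is isomorphic to $C$ via the affine automorphism $\sigma_i\colon x\mapsto a_ix+b_i$ of $\mathbb{A}^1$ (here $a_i\neq 0$ is essential). Thus each $C_i$ is absolutely irreducible, its projection to $x$ is a finite cover of degree $n=\deg_y f$, and $C_{\mathbf{a},\mathbf{b}}$ is exactly the fibre product $C_1\times_{\mathbb{A}^1}\cdots\times_{\mathbb{A}^1}C_s$ along the common coordinate $x$. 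Irreducibility of such a fibre product is equivalent to transitivity of the combined monodromy group $\widetilde{G}\le G_1\times\cdots\times G_s$ on the product of the generic fibres, where $G_i$ is the monodromy group of $C_i\to\mathbb{A}^1$. I would prove this by induction on the number of factors: writing $Y=C_{i_1}\times_{\mathbb{A}^1}\cdots\times_{\mathbb{A}^1}C_{i_{m-1}}$ and appending $C_{i_m}$, it suffices to exhibit a base point $t$ over which $C_{i_m}$ is \emph{totally} ramified while $Y$ is unramified; at such a $t$ the inertia acts transitively on the sheets of $C_{i_m}$ and trivially on those of $Y$, which promotes the inductive transitivity to the enlarged product.

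The engine is the completely ramified point $x_0$. Its preimage $t_i:=\sigma_i^{-1}(x_0)$ is a point of full ramification index $n$ for $C_i$, so the local inertia there is transitive on the $n$ sheets. Letting $B\subseteq\overline{\mathbb{F}}_p$ be the finite branch locus of $\pi$ (with $|B|$ bounded in terms of $d$ only), the branch locus of $C_i$ is $\sigma_i^{-1}(B)$. The inductive step thus reduces to a purely combinatorial assertion: in every sub-collection of the indices there is some $j$ whose totally ramified point $t_j$ is not a branch point of any other curve in the sub-collection, i.e. $t_j\notin\bigcup_{i\neq j}\sigma_i^{-1}(B)$. Granting this, I peel the curves off one at a time, always placing such a $j$ last, and the induction closes.

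This combinatorial assertion is what Lemma~\ref{lem22} is meant to supply. The hypothesis that $a_1^{-1}b_1,\ldots,a_s^{-1}b_s$ are distinct furnishes the distinct translation amounts $x_i=-a_i^{-1}b_i$ required there, and since $d$ and $s$ are fixed while $p\to\infty$ the size condition $4|\CM|<p^{1/r}$ (with $r$ the size of the sub-collection and $\CM$ a copy of $B$ normalized to contain the image of $x_0$) holds for $p$ large. Lemma~\ref{lem22} then guarantees that one of the translated loci reaches an element not covered by the others — the "new" ramification point feeding the induction of the first paragraph — and iterating over all sub-collections yields the absolute irreducibility of $C_{\mathbf{a},\mathbf{b}}$.

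The main obstacle I anticipate is twofold: matching the output of Lemma~\ref{lem22} to the \emph{totally} ramified point specifically (so that the resulting inertia is transitive, not merely nontrivial), and coping with the scalars $a_i$. Because the $a_i$ differ, the loci $\sigma_i^{-1}(B)=a_i^{-1}B-a_i^{-1}b_i$ are scaled-and-translated copies of $B$ rather than pure translates, so Lemma~\ref{lem22} does not apply verbatim, and coincidences $t_i=t_j$ among the totally ramified points can occur. The natural fix is to run the argument not with the full branch locus but with the (nonempty, by the completely ramified hypothesis) set of completely—more generally, oddly—ramified points, so that the new point carries full ramification from a single curve while contributing at worst cancellable ramification from the others, which still forces connectivity of the fibre product. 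Carrying out this ramification bookkeeping carefully, especially in the presence of possible wild ramification in characteristic $p$, is where the real work lies.
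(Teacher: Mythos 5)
Your proposal follows essentially the same route as the paper's own proof. The paper forms the chain $C_{\mathbf{a},\mathbf{b}}=C_s\to C_{s-1}\to\cdots\to C_1\cong C$ (your fibre-product decomposition, peeled off one factor at a time), takes $S$ to be the set of completely ramified points of $\pi$, applies Lemma \ref{lem22} with translation amounts $x_i=b_ia_i^{-1}$, and asserts that this produces ``new completely ramified points in each level'', whence absolute irreducibility. The inertia mechanism you spell out --- a local group acting transitively on the sheets of the new factor and trivially on the sheets of the previous ones --- is exactly what is implicit in that assertion, so at the level of ideas the two arguments coincide, with yours being the more detailed write-up.

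The noteworthy point is that the two difficulties you flag in your final paragraph are genuine, and the paper's proof (which consists of the few lines summarized above) does not address either of them. First, when the $a_i$ are not all equal, the completely ramified loci of the factors are $a_i^{-1}S-a_i^{-1}b_i$, i.e.\ scaled rather than merely translated copies of $S$; Lemma \ref{lem22} as stated concerns pure translates of a single set $\CM$, yet the paper invokes it with $x_i=b_ia_i^{-1}$ without comment. Second, taking $\CM=S$ in Lemma \ref{lem22} yields a point that is completely ramified for the distinguished factor and merely \emph{not completely ramified} for the remaining ones, whereas the induction needs the remaining factors to be \emph{unramified} there (or at least, in the tame case, to have all ramification indices prime to $\deg_y f$), since a partially ramified previous factor prevents the inertia from acting trivially on its sheets. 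So you have not missed a trick that the paper supplies; your ``where the real work lies'' is an accurate description of what the paper's own two-sentence argument glosses over. (Note also that the application to Theorem \ref{thm3} only uses $\mathbf{a}=(1,\ldots,1)$, which removes the first issue but not the second.)
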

\begin{proof}
For $1\leq j\leq s$ we define $C_j$ to be the curve given by the first $j$ equations in \eqref{eqnshift}, i.e.
\begin{equation*}
f(a_ix+b_i,y_i)=0, ~\forall 1\leq i\leq j.
\end{equation*}
We have a chain of coverings of curves,
\begin{equation*}
C_{\mathbf{a},\mathbf{b}}=C_s\rightarrow C_{s-1}\rightarrow\ldots\rightarrow C_1\cong C,
\end{equation*}
where each arrow represent a projection $\pi_i$ given by $(x,y_1,\ldots,y_i)\mapsto(x,y_1,\ldots,y_{i-1})$. Let $S\subseteq\overline{\mathbb{F}}_p$ be the set of completely ramified points for the map $\pi:C\rightarrow\mathbb{A}^1$. Since all the $x_i=b_ia_i^{-1}$ are distinct, we can apply Lemma \ref{lem22} with $x_i=b_ia_i^{-1}$ to conclude that there are new completely ramified points in each level of the above chain of coverings. Since $C$ is absolutely irreducible, this shows that $C_{\mathbf{a},\mathbf{b}}$ is also absolutely irreducible.
\end{proof}

We are now ready to prove Theorem \ref{thm1}. By Proposition \ref{prop31}, if $C$ satisfies the assumptions in the theorem, then $C_{\mathbf{a},\mathbf{b}}$ is absolutely irreducible in $\mathbb{A}^{s+1}$. Theorem \ref{thm1} now follows easily from Lemma \ref{lem21}.

\section{Estimation of $M_k(H)$: Proof of Theorem \ref{thm3}}

Using the binomial theorem to expand the right hand side of \eqref{defMkH}, we obtain
\begin{align*}
M_k(H)&=\sum_{x=0}^{p-1}\sum_{r=0}^k\binom{k}{r}N_{\CB_x}(C)^r\left(-\frac{HN}{p}\right)^{k-r} \\
&= \sum_{r=0}^k\binom{k}{r}\left(-\frac{HN}{p}\right)^{k-r}\sum_{x=0}^{p-1}N_{\CB_x}(C)^r.
\end{align*}
Here we make the convention that if $r=0$, $N_{\CB_x}(C)^r=1$ even when $N_{\CB_x}(C)=0$. Define
\begin{equation*}
S_r(H)=\sum_{x=0}^{p-1}N_{\CB_x}(C)^r.
\end{equation*}
Clearly $S_0(H)=p$ (by our convention). For $r\geq 1$, we have
\begin{equation}\label{eqn41}
S_r(H)=\sum_{x=0}^{p-1}\sum_{(x_1,y_1)\in C\cap\CB_x}\cdots\sum_{(x_r,y_r)\in C\cap\CB_x} 1 =\sum_{x=0}^{p-1}\sum_{\substack{(x_i,y_i)\in C, y_i\in\CJ \\ \{x_1,\ldots,x_r\}\subseteq(x,x+H]}} 1.
\end{equation}
For each $1\leq i\leq r$, let $x_i=x+a_i$, and let $A$ be the set of distinct $a_i$'s. Set $\abs{A}=t$. We have $A\subseteq\{1,2,\ldots,H\}$. From the definition of the Stirling number of second kind, we see that for any given $A$, the number of sets with $\{x_1,\ldots,x_r\}=A$ is $S(r,t)t!$. Grouping the terms in \eqref{eqn41} according to different values of $t$, we obtain
\begin{equation}\label{eqn42}
S_r(H)=\sum_{t=1}^rS(r,t)t!\sum_{\substack{\abs{A}=t\\ A\subseteq[1,H]}}\sum_{x=0}^{p-1}\sum_{\substack{(x+b_i,y_i)\in C, 1\leq i\leq r \\ y_i\in \CJ}} 1.
\end{equation}
By condition \eqref{cond1}, the inner sum
\begin{equation*}
\sum_{x=0}^{p-1}\sum_{\substack{(x+b_i,y_i)\in C, 1\leq i\leq r \\ y_i\in \CJ}} 1
\end{equation*}
is the number of $(\mathbf{a},\mathbf{b})$-pattern of $C$ with $\mathbf{a}=(1,1,\ldots,1)$, $\mathbf{b}$ is any $t$-tuple ordering of the set $A$, and all $y$ coordinates lie in $\CJ$. By Theorem \ref{thm1}, this sum is
\begin{equation*}
\sum_{x=0}^{p-1}\sum_{\substack{(x+b_i,y_i)\in C, 1\leq i\leq r \\ y_i\in \CJ}} 1 = p\cdot\frac{N^t}{p^{t}}+O(d^{2t}\sqrt{p}\log^t{p}).
\end{equation*}
Put this into \eqref{eqn42} yields
\begin{align*}
S_r(H) &= \sum_{t=1}^rS(r,t)t!\sum_{\substack{\abs{A}=t\\ A\subseteq[1,H]}}\left(p\cdot\frac{N^t}{p^{t}}+O(d^{2t}\sqrt{p}\log^t{p})\right) \\
&= p\sum_{t=1}^rS(r,t)t!\binom{H}{t}\left(\frac{N}{p}\right)^t +O\left(\sum_{t=1}^rS(r,t)t!\binom{H}{t}d^{2t}\sqrt{p}\log^t{p}\right).
\end{align*}
Therefore,
\begin{equation*}
M_k(H)=p\sum_{r=0}^k\binom{k}{r}\left(-\frac{HN}{p}\right)^{k-r}\sum_{t=1}^rS(r,t)t!\binom{H}{t}\left(\frac{N}{p}\right)^t + O_k(d^{2k}H^k\sqrt{p}\log^k{p}).
\end{equation*}
We can insert the terms with $t=0$ without altering the sum since $S(r,0)=0$ for any $r\geq 1$ (and for $r=0$ the inner sum is understood to be zero), thus we may apply Lemma \ref{lem23} with $P=N/p$ to conclude that
\begin{equation*}
M_k(H)=p\nu(H,N/p) + O_k(d^{2k}H^k\sqrt{p}\log^k{p}).
\end{equation*}
This completes the proof of Theorem \ref{thm3}.


\begin{thebibliography}{10}

\bibitem{Bom66}
E.~Bombieri, \emph{On exponential sums in finite fields}, Amer. J. Math.
  \textbf{88} (1966), no.~1, 71--105.

\bibitem{Cha11}
T.~H. Chan, \emph{An almost all result on {$q_1q_2\equiv c\pmod{q}$}}, Monatsh.
  Math. \textbf{162} (2011), no.~1, 29--39.

\bibitem{CGZ03}
C.~I. Cobeli, S.~M. Gonek, and A.~Zaharescu, \emph{The distribution of patterns
  of inverses modulo a prime}, J. Number Theory \textbf{101} (2003), no.~2,
  209--222.

\bibitem{GKS02}
S.~M. Gonek, G.~S. Krishnaswami, and V.~L. Sondhi, \emph{The distribution of
  inverses modulo a prime in short intervals}, Acta Arith. \textbf{102} (2002),
  no.~4, 315--322.

\bibitem{Lam11}
Y.~Lamzouri, \emph{The distribution of short character sums}, {\tt
  arXiv:1106.6072 [math.NT]}.

\bibitem{MaZa13}
K.-H. Mak and A.~Zaharescu, \emph{The distribution of rational points and
  polynomial maps on an affine variety over a finite field on average}, {\tt
  arXiv:1301.1359 [math.NT]}.

\bibitem{MaZa11b}
\bysame, \emph{On the distribution of the number of points on a family of
  curves over finite fields}, {\tt arXiv:1110.4693 [math.NT]}.

\bibitem{MaZa11}
\bysame, \emph{The distribution of values of short hybrid exponential sums on
  curves over finite fields}, Math. Res. Lett. \textbf{18} (2011), no.~1,
  155--174.

\bibitem{MaZa12}
\bysame, \emph{Poisson type phenomena for points on hyperelliptic curves modulo
  {$p$}}, Funct. Approx. Comment. Math. \textbf{47} (2012), no.~1, 65--78.

\bibitem{MoVa86}
H.~L. Montgomery and R.~C. Vaughan, \emph{On the distribution of reduced
  residues}, Ann. of Math. (2) \textbf{123} (1986), no.~2, 311--333.

\bibitem{Wei48}
Andr{\'e} Weil, \emph{Sur les courbes alg\'ebriques et les vari\'et\'es qui
  s'en d\'eduisent}, Actualit\'es Sci. Ind., no. 1041 = Publ. Inst. Math. Univ.
  Strasbourg {\bf 7} (1945), Hermann et Cie., Paris, 1948.

\end{thebibliography}

\end{document}